\newtheorem{thm}{Theorem}[section]
\newtheorem{cor}[thm]{Corollary}
\newtheorem{lem}[thm]{Lemma}
\newtheorem{prop}[thm]{Proposition}
\theoremstyle{definition}
\newtheorem{defn}[thm]{Definition}
\newtheorem{quest}{Question}[section]
\newtheorem*{ack}{Acknowledgements}
\numberwithin{equation}{section}
\newcommand{\scal}[1]{\langle #1 \rangle}
\newcommand{\RR}{\mathbb{R}}
\newcommand{\vol}{\textrm{vol}}
\newcommand{\sphere}{\mathrm{\mathbb{S}}}
\newcommand{\fol}{\mathcal{F}}
\begin{document}



\title[]{Algebraic nature of singular Riemannian foliations in spheres}


\author[M. Radeschi]{Marco Radeschi}
\address[Radeschi]{Mathematisches Institut, WWU M\"unster, Germany.}
\email{mrade\_02@uni-muenster.de}
\thanks{}

\author[A. Lytchak]{Alexander Lytchak}
\address[Lytchak]{Mathematisches Institut, Cologne, Germany.}
\email{alytchak@math.uni-koeln.de}
\thanks{}

\date{\today}


\subjclass[2010]{53C12, 57R30}
\keywords{Singular Riemannian foliation, averaging operator, real algebraic submanifold}


\begin{abstract}
 We prove that singular Riemannian foliations in Euclidean spheres can be defined by polynomial equations.
\end{abstract}

\maketitle

\bigskip

\section{Introduction}
 Isoparametric hypersurfaces in Euclidean spheres have been studied by Cartan in the thirties (cf. \cite{Car}) and then forgotten for a long period of time.
Such hypersurfaces are natural and very interesting generalizations of (orbits of)  isometric cohomogeneity one actions on spheres.
A major step towards the understanding of isoparametric hypersurfaces has been done
by M\"unzner in \cite{Mun1}, \cite{Mun2}.  He proved a finiteness result controlling the topology of the hypersurfaces and an algebraicity result building a bridge between geometry and algebra: any isoparametric hypersurface is given as the zero set of  a polynomial equation.    Starting from these results  essentially all isoparametric hypersurfaces have been classified by combining  deep
topological, geometric and algebraic insights \cite{Ab}, \cite{Im}, \cite{St}, \cite{Cec}, \cite{Dorf}.

In the same way isoparametric hypersurfaces generalize isometric cohomogeneity one actions, singular Riemannian foliations  generalize
(orbit decompositions of) arbitrary  isometric actions on spheres. Besides
the intrinsic interest in such objects, related to the study of Euclidean  submanifolds with special properties, singular Riemannian foliations in round spheres
describe local structure of singular Riemannian foliations in arbitrary Riemannian manifolds (cf. \cite{Mol}). Thus the understanding of singular Riemannian foliations in spheres is of major importance in the theory.  Molino, not being aware
 of the existence of non-homogeneous isoparametric foliations (cf. \cite{FKM}), has conjectured that all singular Riemannian foliations in Euclidean spheres are homogeneous. However, there is in fact a vast class of non-homogeneous examples  (cf. \cite{Rad}).  Despite this, all
singular Riemannian foliations with closed leaves are of algebraic origin as our main theorem shows:

\begin{thm}\label{T:algebraic}
Let $(\sphere^{n},\fol)$ be a singular Riemannian foliation with closed leaves. Then there exists a polynomial map $\rho=(\rho_1,\ldots \rho_k):\RR^{n+1}\to \RR^k$ such that any leaf of $\fol$ coinicides with some fiber of $\rho$. The induced map $\sphere^{n}/\fol\to \RR^k$ is a homeomorphism   onto the image.
\end{thm}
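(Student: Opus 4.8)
The plan is to manufacture a large supply of \emph{basic polynomials}, i.e. restrictions to $\sphere^n$ of polynomials on $\RR^{n+1}$ that are constant on the leaves of $\fol$, and then to separate leaves with finitely many of them. The device that produces basic functions out of arbitrary ones is the averaging operator $\mc{A}$: for $f\in C^0(\sphere^n)$ let $\mc{A}(f)(x)$ be the mean value of $f$ over the leaf $L_x$ through $x$, taken against the normalized Riemannian volume of $L_x$. Two formal properties are immediate: $\mc{A}$ is a sup-norm contraction, and $\mc{A}(f)=f$ exactly when $f$ is basic, so $\mc{A}$ is a projection onto the space of basic functions. Since the leaves are closed, $\sphere^n/\fol$ is a compact metric space, so this mean value is well defined and $\mc{A}$ carries continuous functions to continuous functions.

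The heart of the matter, which I expect to be the main obstacle, is the claim that $\mc{A}$ sends the restriction of a polynomial of degree $d$ again to the restriction of a polynomial of degree $\le d$. This is equivalent to $\mc{A}$ commuting with the Laplace operator $\Delta$ of $\sphere^n$, since the restrictions of polynomials of degree $\le d$ are exactly $\bigoplus_{k\le d}\mc{H}_k$, where $\mc{H}_k$ is the $\Delta$-eigenspace of spherical harmonics of degree $k$, and these eigenspaces have pairwise distinct eigenvalues. I would establish the commutation by reducing it to a purely geometric statement: that $\Delta$ carries smooth basic functions to basic functions. On the regular stratum one has $\Delta f=\Delta^{\mathrm{hor}}f-\langle H,\nabla f\rangle$ for basic $f$, where $H$ is the mean curvature field of the leaves; the transverse Laplacian $\Delta^{\mathrm{hor}}f$ and the horizontal gradient $\nabla f$ are automatically basic, so the whole expression is basic as soon as $H$ is projectable, i.e. as soon as $\fol$ has basic mean curvature. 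The latter is the key structural input on singular Riemannian foliations, and the identity is extended across the singular strata by continuity, the regular stratum being open and dense. Granting that $\Delta$ preserves the closed space $\mc{B}$ of basic $L^2$-functions, self-adjointness of $\Delta$ forces it to preserve $\mc{B}^{\perp}$ as well, so the orthogonal projection onto $\mc{B}$ commutes with $\Delta$; identifying this projection with $\mc{A}$ (which needs the natural leaf measure to agree with the disintegration of the ambient volume, a consequence of the equidistance of leaves) yields the claim. In particular $\mc{A}$ preserves each finite-dimensional space $\bigoplus_{k\le d}\mc{H}_k$, so $\mc{A}$ of a polynomial is a basic polynomial of no larger degree, and conversely every basic polynomial is fixed by $\mc{A}$; hence the basic polynomials are precisely $\bigcup_d \mc{A}\big(\bigoplus_{k\le d}\mc{H}_k\big)$.

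With the claim in hand the remainder is routine. The basic polynomials form a subalgebra $\mc{R}$ of $C^0(\sphere^n/\fol)$ containing the constants, and because $\mc{A}$ is a sup-norm contraction fixing basic functions, $\mc{R}$ is dense in the continuous basic functions: given a continuous basic $g$ and $\e>0$, choose by the classical Stone--Weierstrass theorem a polynomial $p$ with $\|p-g\|_\infty<\e$, and then $\|\mc{A}p-g\|_\infty=\|\mc{A}(p-g)\|_\infty<\e$ with $\mc{A}p\in\mc{R}$. Since $\sphere^n/\fol$ is a compact metric space, its continuous functions separate points, so $\mc{R}$ separates the leaves of $\fol$. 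By compactness, finitely many elements $\rho_1,\dots,\rho_k$ of $\mc{R}$ already separate all leaves; enlarging the list by $\rho_0=\|x\|^2$ and choosing polynomial representatives on $\RR^{n+1}$ gives $\rho=(\rho_0,\rho_1,\dots,\rho_k)$. Each $\rho_i$ is constant on leaves, so every leaf lies in a fiber; conversely two points of $\sphere^n$ in one fiber agree under $\rho_0$ (hence lie on the same sphere) and under every separating $\rho_i$ (hence on the same leaf), so each fiber meeting $\sphere^n$ is a single leaf. Finally the induced map $\sphere^n/\fol\to\RR^{k+1}$ is a continuous injection from a compact space to a Hausdorff space, hence a homeomorphism onto its image.

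The decisive and genuinely hard point is the degree bound for $\mc{A}$. The homogeneous case is transparent — there $\mc{A}f(x)=\int_{\G}f(gx)\,dg$ with $g$ acting linearly, so $\deg$ is visibly preserved — but for a general $\fol$ there is no ambient group to integrate over, as inhomogeneous examples exist, and the averaging runs over a leaf whose immersion into the sphere is governed by no linear action. The content is therefore to show that the transverse geometry of $\fol$ is rigid enough that leaf-averaging nonetheless respects the linear-algebraic filtration by degree; the reduction above isolates this in the two geometric facts that $\fol$ has basic mean curvature and that its leaves are equidistant with leaf-constant transverse Jacobian.
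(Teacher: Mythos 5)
Your overall architecture (averaging operator, degree bound for averaged polynomials, separation of leaves, compact-to-Hausdorff endgame) matches the paper's, but two steps have genuine gaps, and the first is precisely at the point you yourself flag as decisive. You verify only that $\Delta$ maps \emph{smooth} basic functions to basic functions (regular-stratum formula plus continuity across strata, which is fine), but for the unbounded operator $\Delta$ the inference ``$\Delta$ preserves $\mc{B}$, hence by self-adjointness preserves $\mc{B}^\perp$, hence commutes with the projection'' is not valid on that information alone: one additionally needs that the projection $\mc{A}$ maps $\mathrm{Dom}(\Delta)=H^2$ (or at least smooth functions) back into $\mathrm{Dom}(\Delta)$, and that smooth basic functions are graph-norm dense in $\mc{B}\cap H^2$. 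The first requirement is exactly the a priori regularity of $\mc{A}f$ across the singular strata, which is the hard content and is nowhere supplied. If you instead run the weak formulation, you only get $\langle \mc{A}\phi,\Delta g\rangle=\lambda_k\langle \mc{A}\phi,g\rangle$ for smooth \emph{basic} test functions $g$, which does not force $\mc{A}\phi\in\mc{H}_k$: that the harmonic components of a basic $L^2$ function are again basic is circularly what you are trying to prove. (As a toy illustration of the failure, if $\mc{B}$ is a closed subspace with $\mc{B}\cap\mathrm{Dom}(\Delta)=\{0\}$, your invariance hypothesis holds vacuously while the projection need not commute with $\Delta$.) The paper closes this hole with genuine analysis: it first shows $[X(f)]=X([f])$ for basic horizontal $X$ via $L_X\omega=-\kappa(X)\,\omega$, deduces that $[f]$ is Lipschitz and extends over the singular set (which has codimension $\ge 2$), and then applies a removability result (Proposition \ref{ana}: the only distribution in $H^{-1,2}$ supported in a set of vanishing $(n-1)$-dimensional Hausdorff measure is $0$) plus elliptic bootstrap to get smoothness of $[f]$ and the global identity $\Delta[f]=[\Delta f]$. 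Note also that your preliminary claim that $\mc{A}$ maps $C^0$ to $C^0$ is not automatic (it needs weak convergence of normalized leaf measures as regular leaves collapse onto a singular leaf of lower dimension); in the paper continuity is a consequence of the smoothness theorem, not an input. Given smoothness, the paper gets polynomiality via the cone foliation $C\fol$ and homogeneity (dilation invariance keeps $[f]$ homogeneous, and a smooth function homogeneous of degree $m$ is a polynomial) rather than your spherical-harmonics filtration; that difference is harmless.

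The second gap is in the endgame: ``by compactness, finitely many elements of $\mc{R}$ already separate all leaves'' is false as a general principle, since the set of pairs of distinct points is the complement of the diagonal in $Q\times Q$ and is \emph{not} compact; for instance $C(Q)$ separates points of the Hilbert cube, yet no finite family of continuous functions can. The paper replaces this step with algebra: by Hilbert's basis theorem the ideal generated by the basic polynomials of positive degree admits finitely many homogeneous basic generators $\rho_1,\ldots,\rho_k$; averaging the relation $p=\sum a_i\rho_i$ yields $p=\sum[a_i]\rho_i$, using that $[\,\cdot\,]$ preserves degree and is a module homomorphism over basic polynomials, and induction on degree gives $\RR[V]^b=\RR[\rho_1,\ldots,\rho_k]$. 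Since the full ring separates leaves (by Weierstrass approximation and averaging, essentially your density argument), its finitely many generators already do, the nonempty fibers of $\rho$ are exactly the leaves, and properness (or, on the sphere, compactness) gives the homeomorphism onto the image. A purely topological repair of your step would require finite-dimensionality of the leaf space together with an embedding-genericity theorem; the algebraic route is both what the theorem's ``algebraic nature'' means and what yields the finitely generated algebra of basic polynomials as a byproduct.
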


Our result  identifies the quotient space $\sphere ^n /\mathcal F$ as a semi-algebraic set and provides a related finitely generated algebra of $\mathcal F$-invariant polynomials (Proposition
\ref{hilbert} below). This result opens the way to algebraic methods in the theory of singular Riemannian foliations. Applications of this approach will be discussed in a forthcoming
paper.

As a direct consequence of  Theorem \ref{T:algebraic}  we deduce:

\begin{cor} \label{oneleaf}
Let $(\sphere^{n},\fol)$ be a singular Riemannian foliation with closed leaves. Then  any leaf of $\mathcal F$ is a real algebraic subvariety of the Euclidean space $\RR ^{n+1}$.
\end{cor}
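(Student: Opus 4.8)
The plan is to read the corollary off directly from Theorem \ref{T:algebraic}, which already supplies all the essential algebraic input. First I would fix an arbitrary leaf $L$ of $\fol$. By Theorem \ref{T:algebraic} there is a polynomial map $\rho=(\rho_1,\ldots,\rho_k):\RR^{n+1}\to\RR^k$ all of whose relevant fibers are leaves, so in particular there is a value $c=(c_1,\ldots,c_k)\in\RR^k$ for which $L$ coincides with the corresponding fiber $\rho^{-1}(c)$.

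Next I would unwind what a fiber of a polynomial map is. By definition
\[
\rho^{-1}(c)=\set{x\in\RR^{n+1}\st \rho_1(x)-c_1=0,\ \ldots,\ \rho_k(x)-c_k=0},
\]
and each function $\rho_i-c_i$ is again a polynomial on $\RR^{n+1}$. Hence $\rho^{-1}(c)$, and therefore $L$, is the common zero locus of a finite family of real polynomials, i.e. a real algebraic subvariety of $\RR^{n+1}$, which is exactly the assertion.

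The only point requiring a word of care is the ambient space: the leaves of $\fol$ live in $\sphere^{n}$, whereas $\rho$ is defined on all of $\RR^{n+1}$. If the identification of $L$ with a fiber in Theorem \ref{T:algebraic} is understood inside the sphere rather than in all of $\RR^{n+1}$, I would simply adjoin the norm polynomial $\sum_{j} x_j^2-1$ to the list $\rho_1-c_1,\ldots,\rho_k-c_k$; the leaf $L$ is then cut out by these $k+1$ polynomials, and the conclusion is unchanged. I do not anticipate any genuine obstacle here, since all the substantive work is carried by Theorem \ref{T:algebraic}, and the corollary is a purely formal consequence of the elementary fact that a fiber of a polynomial map (intersected, if necessary, with the algebraic sphere) is a real algebraic variety.
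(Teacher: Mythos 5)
Your proof is correct and takes essentially the same approach as the paper: the paper derives Corollary \ref{oneleaf} as a direct consequence of Theorem \ref{T:algebraic} with no further argument, and your unwinding---a fiber $\rho^{-1}(c)$ is the common zero set of the polynomials $\rho_1-c_1,\ldots,\rho_k-c_k$---is exactly the intended reasoning. Your precaution about adjoining $\sum_j x_j^2-1$ is harmless but in fact unnecessary, since in the paper's construction the non-empty fibers of $\rho$ coincide with leaves of the cone foliation $C\mathcal F$ (the norm polynomial $\|x\|^2$ is itself basic and lies in the ring generated by the $\rho_i$), so each leaf of $\fol$ already equals a full fiber of $\rho$ in $\RR^{n+1}$.
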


We would like to mention a recent result of a similar spirit and origin.   For a submanifold $L\subset \sphere ^n$ being a leaf of a singular Riemannian foliation  imposes a severe restriction on the set of focal vectors in the normal bundle $\nu L$ of any leaf $L$,
\cite{AT}, \cite{AR15}.  A slightly related restriction on the set of focal vectors of a submanifold is imposed by the assumption of the \emph{tautness} of the submanifold, cf.\cite{Chi}, \cite{CT}, \cite{Wie}.
All isoparametric hypersurfaces are taut (cf. \cite{PT}) and thus the recent result of Q.Chi  that all taut submanifolds of the Euclidean space are algebraic \cite{Chi}, can be seen as a different generalization of M\"unzner's theorem.

A fundamental tool in the proof of Theorem \ref{T:algebraic} is the control of the \emph{averaging operator}, a replacement of the averaging with respect to the Haar measure:

\begin{defn}
Let $(\sphere^n,\fol)$ be a singular Riemannian foliation. For  $f \in L^2 (\sphere ^n )$  the \emph{average of $f$} with respect to $\fol$ is the function $[f]
\in L^2 (\sphere ^n)$ defined at almost every $p\in \sphere ^n$ by
\begin{equation} \label{main}
[f](p)=\fint_{L_p}f  = \frac 1  {vol (L_p) } \int_{L_p} f
\end{equation}
where the integral is taken with respect to  the induced  Riemannian volume   on the leaf $L_p$ through $p$.
\end{defn}

The operator $f\to [f]$ is the orthogonal  projection from $L^2 (\sphere ^n)$
onto the closed linear subspace $L^2 (\sphere ^n, \mathcal F)$ of all representatives of square integrable \emph{basic} function. Recall, that a function is called \emph{basic} (with respect to $\mathcal F$) if it is constant on any leaf of $\mathcal F$.   Much less obvious are the smoothness properties of the average function near singular leaves.  A very closely related problem has been solved in \cite{PR} for the averaging operator of a regular Riemannian foliation with non-closed leaves.
In our case the smoothness is preserved, too:

\begin{thm}\label{smooth average}
Let $(\sphere ^n ,\fol)$ be a singular Riemannian foliation with closed leaves.
 If $f\in  L^2 (\sphere ^n)$  has a smooth, respectively polynomial representative then so does  the  averaged function $[f]$.
\end{thm}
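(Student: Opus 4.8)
The plan is to deduce both assertions at once from a single structural fact: that the averaging operator $A\colon f\mapsto[f]$ commutes with the Laplacian $\Delta$ of $\sphere^n$, or equivalently that $\Delta$ maps basic functions to basic functions. Granting this, I would decompose $L^2(\sphere^n)=\bigoplus_{k\ge0}H_k$ (Hilbert sum) into the $\Delta$-eigenspaces $H_k$, so that $H_k$ is the space of restrictions to $\sphere^n$ of harmonic homogeneous polynomials of degree $k$ and $\bigoplus_{k\le d}H_k$ is exactly the space of restrictions of polynomials of degree $\le d$. Since $A$ is the orthogonal projection onto $B:=L^2(\sphere^n,\fol)$ and commutes with $\Delta$, it preserves each $H_k$; indeed, $\Delta$ preserving the closed subspace $B$ means $B$ has an orthonormal basis of $\Delta$-eigenfunctions, and as distinct $H_k$ are mutually orthogonal a one-line computation then forces $Ah\in H_k$ for every $h\in H_k$. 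Consequently, if $f$ is a polynomial of degree $\le d$ then $[f]=Af\in\bigoplus_{k\le d}H_k$ is again a polynomial of degree $\le d$; and if $f$ is smooth, its harmonic components decay faster than any power of $k$, hence so do the components $Ah_k$ of $[f]$ (because $\|Ah_k\|\le\|h_k\|$), so $[f]$ is smooth. This yields the polynomial statement, with an explicit degree bound, and the smooth statement simultaneously.

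The heart of the matter is therefore the following \emph{Key Lemma}: if $f$ is smooth and basic, then $\Delta f$ is basic. Since $\Delta f$ is automatically smooth and the regular stratum $\sphere^n_{\mathrm{reg}}$ is open and dense, it suffices to prove that $\Delta f$ is constant along leaves on $\sphere^n_{\mathrm{reg}}$ and then extend across the singular leaves by continuity. On $\sphere^n_{\mathrm{reg}}$ the foliation is a regular Riemannian foliation and the round metric is bundle-like, so the gradient $\nabla f$ of a basic function is horizontal and projectable, i.e.\ again basic. Splitting the full Laplacian into its transverse part and the vertical divergence of $\nabla f$ gives the pointwise identity $\Delta f=\Delta^{\mathrm{tr}}f-\langle\nabla f,H\rangle$, where $\Delta^{\mathrm{tr}}f$ is the transverse (basic) Laplacian and $H$ is the mean curvature vector field of the leaves. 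The term $\Delta^{\mathrm{tr}}f$ is basic by construction, so $\Delta f$ is basic precisely when $\langle\nabla f,H\rangle$ is; since $\nabla f$ is already basic, this reduces to the assertion that $H$ is basic.

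Thus the whole argument rests on the geometric input that the regular leaves of a singular Riemannian foliation have basic mean curvature, and I expect this to be the \textbf{main obstacle}. It is exactly where the special structure of $\fol$ must be used: for a \emph{homogeneous} foliation the Key Lemma is immediate, because $\Delta$ commutes with the isometries whose orbits are the leaves and so preserves invariant functions; but for the non-homogeneous examples there is no such symmetry group, and the basic-ness of $H$ must instead be extracted from the equifocal, isoparametric behaviour of singular Riemannian foliations. This is the step carrying all the genuine content, and once $\Delta f$ is known to be basic on the dense set $\sphere^n_{\mathrm{reg}}$ its basic-ness on the singular leaves follows from continuity together with the density of regular leaves in the quotient $\sphere^n/\fol$.

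Finally, two functional-analytic points have to be dispatched to make the reduction rigorous, and some care is needed to avoid circularity. First, one must know that smooth basic functions are dense in $B$ and have enough regularity to upgrade the Key Lemma, a statement about smooth functions, to the operator-level assertion that $B$ reduces $\Delta$; I would arrange this through a core or heat-semigroup argument, or by mollifying basic functions while preserving their basic-ness, taking care not to presuppose the very smoothness of $Af$ that is to be proved. Second, should one wish to sidestep the spectral route for the smooth statement, the alternative is a direct local analysis: express the integral over a leaf through the infinitesimal foliation on the normal sphere at a singular point, and verify by induction on dimension that the resulting radial dependence extends smoothly across the singular stratum. Either way, the spectral argument of the first paragraph remains the cleanest path to the polynomial statement, since it is the only one I see that produces the degree bound for free.
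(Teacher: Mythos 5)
Your overall architecture is sound in its easy parts and correctly identifies the geometric heart of the matter: the basic-ness of the mean curvature field $H$ on the regular stratum (which, as in the paper, comes from \cite{AR15}), the identity $\Delta f=\Delta^{h}f-H(f)$ for basic $f$ on the regular part, and the resulting commutation of averaging with $\Delta$ there; this matches the paper's Lemmas \ref{firstlem} and \ref{secondlem}. The spherical-harmonics step is also fine as stated: \emph{if} the closed subspace $B=L^2(\sphere^n,\fol)$ reduces $\Delta$, then your orthogonality computation does give $A(H_k)\subseteq H_k$, and both the polynomial statement (with degree bound) and the smooth statement follow by elliptic estimates on the $H_k$. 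But there is a genuine gap exactly at the hinge you flag and then wave past: upgrading the Key Lemma (a statement about \emph{smooth} basic functions) to the operator-level assertion that $B$ reduces $\Delta$. This requires a dense supply of smooth basic functions in $B$ (or invariance of $B$ under the heat semigroup/resolvent), and none of your proposed mechanisms delivers it non-circularly. Continuous basic functions are dense in $B$, but \virg{mollifying basic functions while preserving their basic-ness} is precisely the averaging problem being solved: ordinary mollification destroys basic-ness for inhomogeneous foliations, and the canonical smoothing that preserves it is the operator $A$ itself, whose smoothness is the conclusion of the theorem. Likewise, invariance of $B$ under $e^{t\Delta}$, or surjectivity of $(\lambda-\Delta)$ on smooth basic functions (needed for resolvent invariance), amounts to solving a transverse parabolic/elliptic problem across the singular strata of the quotient --- which is the hard analysis, not a dispatchable technicality. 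Even granting density, the Key Lemma gives only $\Delta(B\cap\mathcal C^\infty)\subseteq B$, which is the wrong direction for resolvent invariance; one would additionally need something like essential self-adjointness of $\Delta$ restricted to smooth basic functions as an operator in $B$.

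The paper circumvents this entirely by a route you do not consider: it proves smoothness of $[f]$ directly, with no spectral theory. From Lemma \ref{firstlem} it extracts a uniform Lipschitz bound $|X([f])|\le K$ along basic horizontal fields, giving a locally Lipschitz extension of $[f]$ across the singular set (which has codimension $\ge 2$); then, since $\Delta[f]=[\Delta f]$ holds on the regular part with $[\Delta f]$ also Lipschitz, the removable-singularity result (Proposition \ref{ana}: no nonzero distribution in $H^{-1,2}$ is supported on a set of vanishing $(n-1)$-dimensional Hausdorff measure) yields $[f]\in\mathcal C^2$ with $\Delta[f]=[\Delta f]$ globally, and a standard bootstrap gives smoothness (Theorem \ref{C:smooth}). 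The polynomial case is then deduced not from a degree bound in a harmonic decomposition but from dilation invariance of the cone foliation $C\fol$ on $\RR^{n+1}$: homogeneity $f(rx)=r^mf(x)$ passes to $[f]$, and a smooth function with this homogeneity is a homogeneous polynomial. Your spectral plan could in principle be rescued \emph{a posteriori} --- once Theorem \ref{C:smooth} is known, $A$ maps $C^\infty$ to $C^\infty$ basic functions and $B$ does reduce $\Delta$ --- but as a proof of the theorem it is circular at the decisive step.
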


This result implies that the ring of basic polynomials is finitely generated, Proposition \ref{hilbert}, and  provides enough basic polynomials to deduce Theorem \ref{T:algebraic}.

The averaging operator is defined for any  Riemannian manifold $M$ and singular Riemannian foliation $\mathcal F$ with compact leaves.  If the singular foliation
$\mathcal F$ is given by leaf closures of some regular Riemannian foliation $\mathcal G$
on $M$, then our averaging operator with respect to $\mathcal F$ coincides with the averaging operator with respect to $\mathcal G$, as defined in
\cite{PR}.
Thus, Theorem \ref{smooth average}, Theorem  \ref{C:smooth} and \cite{PR} give rise to the hope that the the following question has an affirmative answer:

\begin{quest}
Let $\mathcal F$ be a singular Riemannian foliation with compact leaves on a complete Riemannian manifold $M$.  Does the averaging operator $f\to [f]$ send smooth functions to smooth functions?
\end{quest}

The proof would require a deeper understanding of the structures of the singularities of a singular Riemannian foliation, in particular the behavior of the mean curvature vectors of regular leaves in a small neighborhood of singular leaves. No problems arise if the mean curvature  field is \emph{basic} in the regular part of $\mathcal F$, a very well known condition in the analysis of Riemannian foliations (cf.  \cite{PR} and the literature therein).
In this case the answer to the question above is indeed affirmative (Theorem \ref{C:smooth}).

\begin{ack}
The authors would like to thank  Miguel Dom\'inguez-V\'azquez, Ken Richardson and Wolfgang Ziller   for helpful comments on a previous version of this paper.
\end{ack}

\section{Preliminaries}
Let $M$ always denote a  Riemannian manifold and   let
$\mathcal F$  always denote a  singular Riemannian foliation on $M$ with compact leaves, i.e.,  a decomposition of $M$ as a disjoint union of compact smooth submanifolds $L_p$, called the leaves of $\mathcal F$, such that  the  leaves are equidistant,  and such that smooth vector fields everywhere tangent to the leaves span all tangent spaces to the leaves.  We refer the reader to \cite{Mol}, \cite{ABT}  and the literature therein for  introductions to the subject.  Note that the assumption that all leaves are compact makes further usual assumptions on $M$ like compactness or  completeness irrelevant.

The  manifold $M$ decomposes as a locally finite union of strata, which are smooth  submanifolds of $M$. There is exactly one open and dense stratum, the principal stratum of $M$, denoted by $M_0$.  The restriction of $\mathcal F$ to $M_0$ is given by a Riemannian submersion with compact fibers $\pi:M_0\to B_0$ onto some Riemannian manifold $B_0$.   For a point $p\in M$ we denote by $H(p)$ the mean curvature vector of the leaf $L_p$ through $p$.  By $\kappa $ we denote the dual $1$-form
$\kappa (v):=\scal{v,H}$.  Note that $\kappa$ is a smooth form on $M_0$. But be aware, that $\kappa$ is definitely non-smooth at singular leaves (indeed, $\|H\|^2$   explodes quadratically
as one approaches a singular point, \cite[Prop. 4.3]{AR}).

 We say that $\mathcal F$ has \emph{basic mean curvature} if the form $\kappa$ is a basic $1$-form on the regular part $M_0$, hence if  the smooth horizontal vector field  $H$ on $M_0$ is the horizontal lift of a vector field on $B_0$.

The union $M_1$ of all strata of codimension at most $1$ consists only  of leaves of maximal dimension. The restriction of $\fol$ to $M_1$ is thus a regular Riemannian foliation, and either $M_1=M_0$ (which is always the case, if $M$ is simply connected), or the restriction of $\mathcal F$ to $M_1$ is not transversally oriented.
In the former case, $M_1$ has a double cover $M_1'$ such that the lift $\mathcal F$ to $M_1'$  has only principal leaves.

Any singular Riemannian foliation given by an isometric group action has basic mean curvature. For a general  manifold $M$ and a general inhomogeneous foliation this condition may not hold.  However, in the case $M=\RR ^{n+1}$ or $M=\sphere ^n$ any singular Riemannian foliation has basic mean curvature, since in this
case  the distance to the focal points determines the eigenvalues of the second fundamental form, see \cite[Cor. 4.6]{AR15}.

If $\mathcal F$ is a singular Riemannian foliation on $\sphere ^n$ there is a natural
extension of $\mathcal F$ to a singular Riemannian foliation  $C\mathcal F$ on $\RR ^{n+1}$.  The  leaves  of the cone $C\mathcal F$ of $\mathcal F$ are the images of  leaves of $\mathcal F$ under the natural dilations $x\to r\cdot x$, for $r\in [0,\infty )$.

\section{Smoothness of the averaging operator}
\subsection{Measurable properties}
Let $(M,\fol)$ be a singular Riemannian foliation with compact leaves.  Let $M_0$ be the principal stratum as above.
 Since $M\setminus M_0$ has measure $0$ we can restrict ourselves to $M_0$ in all question which concern only almost everywhere properties of functions, in particular, when dealing with integrable and square-integrable functions. The subsequent considerations can be found in a much more general situation in \cite{PR}, thus we only sketch the arguments.

 Applying Fubini's theorem  to the Riemannian submersion
$\pi:M_0\to B_0$ we see that for any locally  integrable function $f\in L^1 _{loc} (M)$, the restriction
of $f$ to almost any fiber of $\pi$  (i.e. a leaf of $\mathcal F$) is integrable.  Moreover, for any compact  subset of $K$ of $B_0$   we have the equality
 $$\int _{\pi^{-1} (K)}  f =\int _K \left(\int _ {\pi^{-1} (q)} f\right)  d\vol_B(q)$$

Thus the averaging map $f\to [f]$ is well defined for any $f\in L^1_{loc} (M)$.
We identify $L^2(M)$ with $L^2(M_0)$. From the above  formula and the inequality of Cauchy-Schwarz  we deduce (cf. \cite{PR}) that for any
$f\in L^2 (M_0)$ the average $[f]$ defined by  Equation \ref{main} is indeed an element
of $L^2 (M_0)$. Moreover, we see that the averaging operator has norm $1$, hence
$\int _M f^2 \geq \int _M [f] ^2$.
By definition, the averaging operator is linear. For any function $f\in L^2 (M_0) =L^2(M)$
the average function $[f]$ is constant on almost all leaves, hence it has a basic representative.  On the other hand, if $f$ is constant on almost all leaves then $[f] =f$ in
$L^2 (M_0)$.  The kernel of the averaging operator consists of  all functions  $g \in L^2(M_0)$ whose average on almost any leaf is zero. In this case, for any basic function $ f \in L^2(M_0)$   the product
$f\cdot g$ still has average $0$ on almost all leaves and therefore $\int _M f\cdot g =0$.
Hence the kernel of the  averaging operator $[\cdot ]$ is orthogonal to its image. This shows that $[\cdot ]$
is indeed the orthogonal projection from $L^2 (M)$ onto the subset $L^2(M,\mathcal F)$ of all functions in $L^2 (M)$ which have a basic representative.

\subsection{Commuting operators}
Let us now assume that $\mathcal F$ has basic mean curvature  $H$. Denote as above by
$\kappa$ the  corresponding basic  smooth  $1$-form on $M_0$.    Note that for any smooth function $f:M_0 \to \RR$ the average function
$[f]:M_0 \to \RR$ is a smooth basic function.  The smoothness is evident, since in $M_0$
the leaves depend smoothly on the point. Indeed, the smoothness statement  is a trivial
case of \cite{PR}.  Using our assumption on the mean curvature we are going to conclude that the average operator (on the principal part  $M_0$) commutes with basic horizontal derivatives and with the Laplacian.

First we claim:
\begin{lem} \label{firstlem}
Let $X$ be a smooth, basic horizontal  vector field on $M_0$ and let $f$ be a smooth function. Then $[X(f)] =X([f])$.
\end{lem}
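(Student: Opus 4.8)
The plan is to differentiate the averaging formula along the flow of $X$, exploiting the fact that a basic horizontal field moves leaves to leaves and that its (normal) deformation of a leaf is governed by the mean curvature. Let $\phi_t$ denote the local flow of $X$ on $M_0$. Since $X$ is basic horizontal, it is the horizontal lift of a vector field $\bar X$ on $B_0$, hence $\pi$-related to $\bar X$; therefore $\pi\circ\phi_t=\bar\phi_t\circ\pi$ and $\phi_t$ carries the leaf $L_p$ diffeomorphically onto the leaf $L_{\phi_t(p)}$. Writing $N(p):=\int_{L_p}f\,d\vol_{L_p}$ and $v(p):=\vol(L_p)$, so that $[f]=N/v$, I would pull the integral over the moving leaf back to the fixed leaf $L_p$ through $\phi_t$. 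The change of variables gives $N(\phi_t(p))=\int_{L_p}(f\circ\phi_t)\,J_t\,d\vol_{L_p}$ and $v(\phi_t(p))=\int_{L_p}J_t\,d\vol_{L_p}$, where $J_t$ is the Jacobian of $\phi_t|_{L_p}$ with respect to the induced volume densities, and $J_0\equiv 1$.

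The heart of the matter is the derivative $\frac{d}{dt}\big|_{t=0}J_t$. Because $X$ is horizontal, its restriction to $L_p$ is a normal variation of the closed leaf, so the pointwise first variation of the induced volume element yields $\frac{d}{dt}\big|_{t=0}J_t=-\scal{X,H}=-\kappa(X)$ along $L_p$; compactness of the leaves removes any boundary contribution. The basic mean curvature hypothesis now enters decisively: since $\kappa$ is basic and $X$ is basic horizontal, the function $\kappa(X)=\scal{X,H}$ is constant on each leaf and hence factors out of integrals over $L_p$. Differentiating at $t=0$ (interchanging $\frac{d}{dt}$ and $\int_{L_p}$, justified by smoothness and compactness) and using $X(g)(p)=\frac{d}{dt}\big|_{t=0}g(\phi_t(p))$, I obtain $X(v)=-\kappa(X)\,v$ and $X(N)=\int_{L_p}X(f)\,d\vol_{L_p}-\kappa(X)\,N$.

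Finally I would assemble these via the quotient rule $X([f])=\frac{X(N)}{v}-\frac{N}{v}\cdot\frac{X(v)}{v}$. Substituting, the leading contribution $\frac1v\int_{L_p}X(f)\,d\vol_{L_p}$ is exactly $[X(f)]$, while the two mean-curvature corrections, $-\kappa(X)[f]$ coming from $X(N)/v$ and $+\kappa(X)[f]$ coming from $-(N/v)(X(v)/v)$, cancel; this gives $X([f])=[X(f)]$. I expect the main obstacle to be the justification of the first-variation step and, above all, the recognition of why basicity is indispensable: without the basic mean curvature assumption the correction terms would read $-\frac1v\int_{L_p}f\,\kappa(X)$ and $+\frac{N}{v^2}\int_{L_p}\kappa(X)$, which do not cancel in general. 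It is precisely the constancy of $\kappa(X)$ along each leaf that allows these terms to be pulled out and to annihilate each other, so that the averaging operator commutes with $X$.
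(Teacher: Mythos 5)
Your proof is correct and is in essence the paper's own argument: both differentiate the leaf integral along the flow of $X$, using the first-variation identity $L_X\omega=-\kappa(X)\,\omega$ for the leafwise volume (the paper cites \cite[Proposition 4.1.1]{GW} for exactly your Jacobian computation) together with the decisive fact that basicness of $\kappa$ and $X$ makes $\kappa(X)$ constant along each leaf. The only difference is organizational: the paper first reduces by linearity to the case $[f]=0$, so the volume normalization drops out and one only needs $\int_{L_p}X(f)=0$, whereas you carry $\vol(L_p)$ along explicitly and let the two $\kappa(X)$-correction terms cancel via the quotient rule.
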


\begin{proof}
Both sides are linear in $f$ and clearly agree on smooth basic functions.  Thus
it suffices to prove the equality for all smooth functions $f$ with $[f]=0$, since
any function $f$ is the sum of $[f]$ and  $f-[f]$.

Denote by $\omega$ the volume form  of the leaves, well defined up to a sign.  The  mean curvature describes the infinitesimal volume change along the flow of $X$, hence
the Lie derivative of the  measures $\omega$ along the vector field $X$ is given by  $L_X (\omega) =-\kappa (X)\cdot \omega$ (\cite[Proposition 4.1.1]{GW}).  Since $\kappa$ is basic, the function $\kappa (X)$ is constant along each leaf.
Thus for any function $f$ with $[f]\equiv 0$ and any $p\in M_0$   we have
$$0=X(0) = X\Big(\int _{L_p}  f \omega \Big) = \int _{L_p} X(f) \omega + \int _{L_p} f L_X (\omega ) =\int _{L_p} X(f) \omega - \kappa (X) (p) \int _{L_p} f \omega$$
The last summand vanishes by assumption, hence $\int _{L_p}   X(f) = 0$.
The above equation implies $[X(f)]=0= X[f]$.
\end{proof}

From  the previous Lemma we are going to deduce  that the averaging operator commutes with the Laplacian (cf. \cite{PR}, Propositions 4.1 and 4.3).

\begin{lem} \label{secondlem}
If $\Delta$ denotes the Laplacian on $M_0$ then $\Delta [f] =[\Delta f]$ for any smooth function $f:M_0\to \RR$.
\end{lem}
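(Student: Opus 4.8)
The plan is to decompose the Laplacian on $M_0$ along the Riemannian submersion $\pi:M_0\to B_0$ into a horizontal part, a leafwise part, and a mean-curvature term, and then to average each piece separately using Lemma \ref{firstlem}. Concretely, tracing the identity $\Delta f=\sum_i \mathrm{Hess}\,f(e_i,e_i)$ over a local orthonormal frame adapted to the splitting $TM_0=\mathcal H\oplus\mathcal V$ into the horizontal and the vertical (leaf-tangent) subbundles, I obtain the pointwise, frame-independent decomposition
\[
\Delta f \;=\; \sum_a \mathrm{Hess}\,f(E_a,E_a)\;+\;\Delta^{L}f\;-\;H(f),
\]
where $\{E_a\}$ is a horizontal orthonormal frame, $\{V_\alpha\}$ a vertical one, $\Delta^{L}f$ denotes the intrinsic Laplacian of $f$ restricted to the leaf through the given point, and $H=\sum_\alpha(\nabla_{V_\alpha}V_\alpha)^{\mathcal H}$ is exactly the leaf mean curvature vector from the statement. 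The three summands are then handled independently.

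The leafwise term averages to zero: since every leaf $L_p$ is a closed manifold, the divergence theorem gives $\int_{L_p}\Delta^{L}f=0$, so $[\Delta^L f]\equiv 0$; and because $[f]$ is basic, hence constant along leaves, $\Delta^L[f]\equiv 0$ as well. The mean-curvature term is where the standing hypothesis of \emph{basic mean curvature} enters decisively: $H$ is then a basic horizontal vector field, so Lemma \ref{firstlem} applies directly and yields $[H(f)]=H([f])$.

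The remaining, genuinely delicate, term is the horizontal trace of the Hessian, $\sum_a \mathrm{Hess}\,f(E_a,E_a)=\sum_a\big(E_a(E_a f)-(\nabla_{E_a}E_a)f\big)$. Here I would work on a tube around a fixed leaf, where the $E_a$ can be chosen \emph{basic}, i.e.\ as horizontal lifts of a local orthonormal frame on $B_0$. The key structural observation is that $\nabla_{E_a}E_a$ is again basic and horizontal: its vertical O'Neill component is $\tfrac12[E_a,E_a]^{\mathcal V}=0$, while its horizontal component is the horizontal lift of $\check\nabla_{\check E_a}\check E_a$ and is therefore basic. Granting this, $W:=\sum_a\nabla_{E_a}E_a$ is a basic horizontal field, and applying Lemma \ref{firstlem} once to $W$ and twice to each $E_a$ (using that $[f]$ and $E_a[f]$ are smooth basic functions on $M_0$) gives
\[
\Big[\sum_a\big(E_a(E_a f)-(\nabla_{E_a}E_a)f\big)\Big]=\sum_a\big(E_a(E_a[f])-(\nabla_{E_a}E_a)[f]\big)=\sum_a\mathrm{Hess}\,[f](E_a,E_a).
\]

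Assembling the three pieces by linearity of the averaging operator yields $[\Delta f]=\sum_a\mathrm{Hess}\,[f](E_a,E_a)-H([f])$, which is precisely $\Delta[f]$ because the leafwise term of $\Delta[f]$ also vanishes; this proves the lemma. I expect the main obstacle to be exactly the second-order horizontal term: a priori $\nabla_{E_a}E_a$ could fail to be basic, or even horizontal, in which case Lemma \ref{firstlem} would not apply and the two horizontal derivatives would not pass through the average. The vanishing of the O'Neill tensor $A_{E_a}E_a$, together with the fact that covariant derivatives of basic fields have basic horizontal part, is what rescues the argument. The remaining care is bookkeeping: checking that the decomposition is genuinely frame-independent (so that the local choice of basic frame around a leaf is harmless, the final identity being pointwise) and that the leaf term of both $\Delta f$ and $\Delta[f]$ drops out under averaging.
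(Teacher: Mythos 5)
Your proof is correct and follows essentially the same route as the paper: both decompose $\Delta$ into a horizontal trace (handled via the skew-symmetry of the O'Neill tensor, which makes $\nabla_{E_a}E_a$ basic horizontal so that Lemma \ref{firstlem} applies), a leafwise Laplacian (which averages to zero on closed leaves and kills basic functions), and the mean-curvature term $H(f)$ (where the basic mean curvature hypothesis feeds into Lemma \ref{firstlem}). The only difference is cosmetic bookkeeping: the paper groups the leaf term and $-H(f)$ into a vertical Laplacian $\Delta^v$ before splitting, whereas you average the three pieces directly.
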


\begin{proof}
Fix  a  point $p\in M_0$, consider an orthonormal frame $\{X_1, \ldots X_{k}, V_1,\ldots V_{n-k}\}$ in a neighborhood of $p$ where the $X_i$ are basic and the $V_i$ are vertical.

 Define the basic, resp. vertical Laplacians
$\Delta^h, \Delta^v$ as
\[
\Delta^hf=\sum_{i=1}^k\left(X_iX_i(f)- \nabla_{X_i}X_i(f)\right)\qquad \Delta^vf=\sum_{i=1}^{n-k}\left(V_iV_i(f)- \nabla_{V_i}V_i(f)\right)
\]
 The operators $\Delta^v$, $\Delta^h$ do not depend on the choice of the vertical and horizontal frames, and moreover $\Delta=\Delta^h+\Delta^v$ is the usual Laplacian.
Hence it suffices to prove the following identities:
\begin{align}
\label{E:lap-h}\Delta^h[f]&=[\Delta^hf]\\
\label{E:lap-v}\Delta^v[f]&=[\Delta^vf].
\end{align}

Since  the O'Neill tensor is skew-symmetric we see $\Delta^hf=\sum_iX_iX_i(f)-\nabla_{X_i}^hX_i(f)$.  Hence  $\Delta ^h$ is a sum of compositions of derivatives along basic horizontal fields. But the averaging operator $[\cdot ]$ commutes with derivations along basic horizontal fields by Lemma \ref{firstlem}.  This implies  Equation \eqref{E:lap-h}.

On the other hand, consider the operator $\Delta ^l (f):= \sum_{i=1}^{n-k}\left(V_iV_i(f)- \nabla_{V_i}^v V_i(f)\right) $  which is just the Laplacian
along  the leaves of the restriction of $f$ to the leaves.  By definition
$$\Delta ^l(f)= \Delta ^v (f) + \sum_{i=1}^{n-k} \nabla_{V_i}^h V_i(f) =
\Delta ^v (f) +H(f)$$
Due to Lemma \ref{firstlem}, the derivation along the basic field $H$ commutes with
$[\cdot ]$.   Moreover, since the Laplacian of a constant function is $0$ and since the integral of the Laplacian of any function on any compact manifold is $0$, we get for any smooth function $f:M_0\to \RR$
$$\Delta ^l  [f]\equiv 0\equiv [\Delta ^l f]$$
In particular, $\Delta ^l$ commutes with the averaging operator as well.
This implies Equation \eqref{E:lap-v}.
\end{proof}

\subsection{Boot-strapping to smoothness}
Under the assumptions above we are going to prove now that for any smooth function $f:M\to \RR$, the smooth average function $[f] :M_0 \to \RR$
has a smooth extension to $M$.
\begin{thm} \label{C:smooth}
Let $\mathcal F$ be a singular Riemannian foliation with compact leaves on a Riemannian manifold $M$.  Assume that  $\mathcal F$ has basic mean curvature. Then for any smooth function $f:M\to \RR$ the average function $[f] \in L^1_{loc} (M)$  has a
smooth representative.
\end{thm}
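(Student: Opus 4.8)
The plan is to combine the commutation relation of Lemma~\ref{secondlem} with interior elliptic regularity for the Laplacian, bootstrapping the Sobolev regularity of $[f]$ across the singular strata. Since smoothness is a local property and the leaves are compact, I would fix a point $p_0\in M$ and work in a small \emph{saturated} (i.e. $\mathcal F$-invariant) neighborhood $U$ of the compact leaf $L_{p_0}$ with $\overline U$ compact. On $\overline U$ the function $f$ is bounded, and because $U$ is a union of leaves the average $u:=[f]$ satisfies $\|u\|_{L^\infty(U)}\le\|f\|_{L^\infty(\overline U)}$; the same bound holds for each iterated average $[\Delta^j f]$, which are therefore all bounded on $U$.

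Next I would isolate the set across which regularity must be established. By the discussion in Section~2, the union $M_1$ of strata of codimension at most $1$ carries a \emph{regular} Riemannian foliation, so the leaves vary smoothly along $M_1$ and $u$ is already smooth there, being the fibre-average of a smooth function over smoothly varying compact leaves (the transverse-orientation issue is local and harmless). Hence the genuinely singular set $\Sigma:=U\setminus M_1$ has codimension at least $2$, and on the open set $U\setminus\Sigma$ Lemma~\ref{secondlem} gives the classical identity $\Delta u=[\Delta f]$. Moreover, by Lemma~\ref{firstlem} the horizontal derivatives of $u$ along unit basic fields $X_i$ are $X_i u=[X_i f]$, bounded by $\sup_{\overline U}|\nabla f|$, while the vertical derivatives of the basic function $u$ vanish; thus $u$ has bounded classical gradient off $\Sigma$. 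Since $\Sigma$, having codimension $\ge 2$, carries no $(n-1)$-dimensional Hausdorff measure, a function Lipschitz off $\Sigma$ cannot acquire a singular distributional gradient, and therefore $u\in W^{1,\infty}_{loc}(U)\subset H^1_{loc}(U)$.

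The main step is to promote $\Delta u=[\Delta f]$ to a \emph{global} weak identity on all of $U$. This is a removable-singularity statement: $u\in H^1_{loc}$ solves $\Delta u=g$ with $g:=[\Delta f]\in L^\infty(U)$ away from the set $\Sigma$ of codimension $\ge2$. I would prove it by a capacity/cutoff argument: choose cutoffs $\chi_\varepsilon$ vanishing near $\Sigma$, equal to $1$ off an $\varepsilon$-neighborhood, with $\|\nabla\chi_\varepsilon\|_{L^2(U)}\to0$, which is possible precisely because $\Sigma$ has vanishing $W^{1,2}$-capacity. Testing the equation valid on $U\setminus\Sigma$ against $\chi_\varepsilon\varphi$ for $\varphi\in C_c^\infty(U)$, integrating by parts once, and letting $\varepsilon\to0$, the cross terms carrying $\nabla\chi_\varepsilon$ vanish (here one uses $u,\nabla u\in L^2_{loc}$ together with $\|\nabla\chi_\varepsilon\|_{L^2}\to0$), which yields $\int_U u\,\Delta\varphi=\int_U g\,\varphi$. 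I expect this removable-singularity step to be the crux of the argument: it is exactly where the singular behavior of $\mathcal F$, including the explosion of $\|H\|$ near $\Sigma$, could in principle obstruct the extension, and the saving grace is that only the codimension of $\Sigma$, not the fine geometry near it, enters.

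Finally I would bootstrap. Applying the previous paragraph with the smooth function $\Delta^{j-1}f$ in place of $f$ gives $\Delta[\Delta^{j-1}f]=[\Delta^j f]$ weakly on $U$, so that by induction $\Delta^j u=[\Delta^j f]\in L^\infty(U)\subset L^2_{loc}(U)$ for every $j$. Interior elliptic regularity for $\Delta$ upgrades $u\in L^2_{loc}$ with $\Delta u\in L^2_{loc}$ to $u\in H^2_{loc}$, and iterating with $\Delta^j u\in L^2_{loc}$ gives $u\in H^{2k}_{loc}(U)$ for all $k$. Sobolev embedding then yields $u\in C^\infty(U)$, i.e.\ $[f]$ has a smooth representative near $p_0$; as $p_0$ was arbitrary, this proves the theorem.
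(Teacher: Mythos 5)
Your proposal is correct, and its overall skeleton coincides with the paper's: smoothness of $[f]$ on the codimension-$\le 1$ part $M_1$, gradient bounds from Lemma \ref{firstlem}, the identity $\Delta[f]=[\Delta f]$ from Lemma \ref{secondlem}, extension of this identity across the codimension-$\ge 2$ set $\Sigma=M\setminus M_1$, and an elliptic bootstrap. Where you genuinely diverge is in the two analytic steps. For the removable-singularity step, the paper proves Proposition \ref{ana}: it solves a Dirichlet problem to write $u=u_1+u_2$ with $\Delta u_1=g$ classically, and then kills $\Delta u_2$ by quoting the potential-theoretic fact that the only distribution in $H^{-1,2}$ supported in a set of vanishing $(n-1)$-dimensional Hausdorff measure is zero. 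You instead test the equation directly with cutoffs $\chi_\varepsilon$ of small Dirichlet energy; this requires $\Sigma$ to have zero $2$-capacity, a strictly stronger hypothesis than vanishing $\mathcal H^{n-1}$-measure, but it holds here because $\Sigma$ is a locally finite union of smooth submanifolds of codimension $\ge 2$, hence of locally finite $\mathcal H^{n-2}$-measure. Your version is more self-contained (no Dirichlet solve, no citation of the support theorems from Mazya--Poborchi and Adams), while the paper's Proposition \ref{ana} is sharper as a stand-alone statement. Similarly, you replace the paper's Lipschitz-extension argument (connecting points of $U\cap M_1$ by short curves avoiding the codimension-$\ge2$ set) with Sobolev removability of $\mathcal H^{n-1}$-null sets to get $u\in W^{1,\infty}_{loc}$, and you run the bootstrap in the $L^2$-scale via $\Delta^j u=[\Delta^j f]\in L^\infty$ and $H^{2k}_{loc}$-regularity plus Sobolev embedding, where the paper goes through $\mathcal C^2$ and a standard bootstrap; both are equivalent. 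Two small points you gloss over, both easily repaired: the identity $\Delta u=[\Delta f]$ of Lemma \ref{secondlem} is proved on $M_0$, and you invoke it on $M_1$ --- the paper covers this by remarking that Lemmas \ref{firstlem} and \ref{secondlem} persist on $M_1$ via the double cover $M_1'$ (alternatively, since $u$ and $[\Delta f]$ are smooth on $M_1$ and $M_0$ is dense, continuity extends the identity); and the ``harmless transverse-orientation issue'' on $M_1$ is exactly what the paper's deck-transformation argument handles.
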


\begin{proof}
For any smooth function $f:M \to \RR$, denote by $[f] :M_0 \to \RR$ the smooth representative of the averaged function defined at every point of $M_0$ by \eqref{main}.  We claim that $[f]$ has a smooth extension  to $M_1$, the union of all strata of codimension at most $1$. (This is again a special case of \cite{PR}).
Indeed, either $M_1=M_0$, or the lift $\mathcal F'$ of $\mathcal F$ to a double cover $M_1'$ of $M_1$ has only principal leaves.  Since the averaging in
$M_1'$ with respect to $\mathcal F'$  commutes with the deck transformations of the cover $M_1' \to M_1$, we obtain the smoothness of the lift of $[f]$ to $M_1'$ and therefore the smoothness of $[f]$ on $M_1$.
Note, that since $\mathcal F$ is a regular foliation on $M_1$ the mean curvature field $H$ extends to a smooth vector field on $M_1$.  Moreover,
Lemma \ref{firstlem} and Lemma \ref{secondlem} remain true for smooth functions on $M_1$.

Next, we  claim that $[f]$ has a locally Lipschitz extension to $M$.  Consider
an arbitrary  point $p\in M$.  It is enough to find  a neighborhood $U$ of
$p$ in $M$ such that $[f]:U\cap M_1 \to \RR$ is Lipschitz continuous.   In order to do so, consider
 an open  $\mathcal F$-saturated pre-compact neighborhood $V$ of
$p$ in  $M$.  Since $f$ is smooth and  $\bar V$ compact, $f$ must be  $K$-Lipschitz on $V$ for some $K>0$.
Hence, for any unit basic horizontal vector field $X$ on $V_1:=V\cap M_1$ we have $|X(f)| \leq K$.
Due to Lemma \ref{firstlem}, we deduce that  $\big|X([f])\big| \leq K$ on $V_1$.  Since $[f]$ is basic, its gradient field is basic as well and we deduce that
$[f]:V_1\to \RR$ is \emph{locally} $K$-Lipschitz.  Consider now a small convex  ball $U\subset V$ around $p$ in $M$. Since $M\setminus M_1$ has codimension at least $2$ in $M$, any pair  of points in
$U_1:=U \cap M_1$ can be connected in $U_1$ by a smooth curve of length arbitrary close to the distance between these points.  Integrating along this curve  we deduce that $[f]:U_1 \to \RR$ is $K$-Lipschitz continuous. This finishes the proof of the claim.

The function $\Delta f$ is smooth as well as $f$. Due to the previous claim, the functions  $[f],[\Delta f]$ are both locally Lipschitz in $M$.
 If we proved that $[f]\in \mathcal C^2$ for any smooth function $f$ and that $\Delta[f]=[\Delta f]$ on the whole of $M$, then a standard bootstrap argument would prove the smoothness of $[f]$. Since the complement $Y=M\setminus M_1$ of the regular stratum has codimension $\geq 2$ in $M$, the following analytic  Proposition \ref{ana} together with Lemma \ref{secondlem} provides exactly what we need, thus finishing the proof of the Theorem.
\end{proof}

\begin{prop} \label{ana}
Let $M$ be an $n$-dimensional  Riemannian manifold and let $Y$ be a
closed subset  of $M$ with vanishing $(n-1)$-dimensional Hausdorff measure.
Assume that $u,g$ are locally Lipschitz functions on $M$ such that  $\Delta u =g$
on $M\setminus Y$ in the sense of distributions.  Then the function $u$
is of class $\mathcal C^2$. Moreover, $\Delta u=g$ on  $M$.
\end{prop}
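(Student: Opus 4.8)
The plan is to show that a locally Lipschitz distributional solution $u$ of $\Delta u = g$ (with $g$ locally Lipschitz, hence locally bounded, hence locally in $L^p$ for all $p$) across a removable set $Y$ of codimension at least one is in fact a genuine $\mathcal{C}^2$ solution on all of $M$. The key observation is that the hypothesis $\mathcal{H}^{n-1}(Y)=0$ should make $Y$ \emph{removable} for the relevant class of weak solutions: the equation $\Delta u = g$, known only on $M\setminus Y$, should automatically propagate across $Y$. First I would reduce to a local statement, working in a coordinate chart where $M$ is an open subset of $\RR^n$ and the Laplacian is a second-order elliptic operator with smooth coefficients; since everything is local and the conclusion is local, I may freely shrink the chart.

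The heart of the argument is the removability step: I claim that $\Delta u = g$ holds on all of $M$ in the sense of distributions, not merely on $M\setminus Y$. To see this, take an arbitrary test function $\varphi \in \mathcal{C}^\infty_c(M)$ and consider $\int_M u\,\Delta\varphi - \int_M g\,\varphi$; I must show this vanishes. Away from $Y$ it vanishes by hypothesis, so the issue is the contribution near $Y$. The standard device is to multiply $\varphi$ by a cutoff $\chi_\e$ that equals $0$ on a small neighborhood of $Y$ and $1$ outside a slightly larger neighborhood, with $|\nabla\chi_\e|$ controlled. Applying the known equation to $\chi_\e\varphi$ and integrating by parts, the error terms involve $\int u\,(\nabla\chi_\e\cdot\nabla\varphi + \varphi\,\Delta\chi_\e)$ and similar; using that $u$ is Lipschitz (so bounded) and that $\mathcal{H}^{n-1}(Y)=0$, I expect these error terms to vanish as $\e\to 0$. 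The standard way to build such cutoffs is to use that a set of vanishing $(n-1)$-Hausdorff measure has \emph{vanishing $(n-1)$-capacity}, which guarantees cutoff functions $\chi_\e$ with $\int |\nabla\chi_\e|^2 \to 0$ and, more to the point for first-derivative error terms, controllable $L^1$ norms of $\nabla\chi_\e$. Once the error terms are shown to vanish, $\Delta u = g$ holds globally as distributions.

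With the equation established globally, the rest is elliptic regularity. Since $g$ is locally Lipschitz, it lies in $\mathcal{C}^{0,\alpha}_{loc}$ for every $\alpha\in(0,1)$, so interior Schauder estimates for the (smooth-coefficient) elliptic operator $\Delta$ upgrade the distributional solution $u$ to $u\in\mathcal{C}^{2,\alpha}_{loc}$; in particular $u\in\mathcal{C}^2(M)$ and $\Delta u = g$ holds classically. This yields both conclusions of the Proposition at once. (If one prefers to avoid Schauder and stay with $L^p$ theory: $g\in L^p_{loc}$ for all $p<\infty$ gives $u\in W^{2,p}_{loc}$ by Calder\'on--Zygmund, and Sobolev embedding with $p>n$ gives $u\in\mathcal{C}^{1,\beta}_{loc}$; Schauder is the clean route to the full $\mathcal{C}^2$ conclusion.)

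I expect the main obstacle to be the removability step, specifically verifying that the cutoff error terms vanish using only $\mathcal{H}^{n-1}(Y)=0$ rather than a stronger capacity or measure hypothesis. The delicate point is that the error terms contain $\nabla\chi_\e$, which is supported in a thin shell around $Y$ and whose $L^1$ mass is governed by the $(n-1)$-dimensional size of $Y$; one must choose the shells so that the product of $\|\nabla\chi_\e\|_{L^1}$ (of order the $(n-1)$-measure of a neighborhood of $Y$) against the bounded factors $u$, $\nabla\varphi$ tends to zero. Because $u$ is merely Lipschitz and not smooth across $Y$, I cannot integrate by parts naively through $Y$, so the cutoff argument is genuinely necessary; the codimension hypothesis is exactly what makes the first-order boundary contributions negligible, and getting this quantitative estimate clean is where the real work lies.
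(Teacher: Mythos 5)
Your route is genuinely different from the paper's. The paper localizes to a small ball $B$, solves the Dirichlet problem to produce $u_1\in \mathcal C^2(B)$ with $\Delta u_1=g$, and reduces everything to showing that the locally Lipschitz function $u_2=u-u_1$ is harmonic: since $u_2\in H^{1,2}(B)$, the distribution $\Delta u_2\in H^{-1,2}(B)$ is supported in $Y\cap B$, and the results cited from Mazya--Poborchi and Adams--Fournier are invoked to conclude that it vanishes. You instead prove the removability statement directly, by a cutoff argument, and then apply elliptic regularity; this is a legitimate and more self-contained strategy, and your final Schauder step (giving $u\in \mathcal C^{2,\alpha}_{loc}$) is fine. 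But two points in your removability step need repair.

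First, your capacity claim is false: $\mathcal{H}^{n-1}(Y)=0$ does \emph{not} imply vanishing $W^{1,2}$-capacity, and you cannot arrange $\int|\nabla\chi_\epsilon|^2\to 0$. A compact set of Hausdorff dimension strictly between $n-2$ and $n-1$ has $\mathcal{H}^{n-1}=0$ but positive $(1,2)$-capacity (it carries a nonzero measure of finite energy, which is a nonzero element of $H^{-1,2}$ supported on the set). Quantitatively, if you cover $Y\cap\operatorname{supp}\varphi$ by balls $B(x_i,r_i)$ with $\sum_i r_i^{n-1}<\epsilon$ and take standard cutoffs with $|\nabla\chi_\epsilon|\lesssim r_i^{-1}$ on the $i$-th annulus, then $\|\nabla\chi_\epsilon\|_{L^2}^2\sim\sum_i r_i^{n-2}$, which is not small. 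What is true, and what you correctly flag as the relevant bound, is $\|\nabla\chi_\epsilon\|_{L^1}\lesssim\sum_i r_i^{n-1}<\epsilon$.

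Second, your displayed error term $\int u\,(\nabla\chi_\epsilon\cdot\nabla\varphi+\varphi\,\Delta\chi_\epsilon)$ cannot be estimated as written: $\|\Delta\chi_\epsilon\|_{L^1}\sim\sum_i r_i^{n-2}$ does not tend to zero, and mere boundedness of $u$ (your parenthetical ``so bounded'') is insufficient. The fix is to use the full Lipschitz hypothesis, i.e.\ $\nabla u\in L^\infty_{loc}$, and run the argument in the first-order weak formulation: from $\int\nabla u\cdot\nabla\psi=-\int g\,\psi$ for $\psi\in\mathcal C^\infty_c(M\setminus Y)$, test with $\psi=\chi_\epsilon\varphi$; the only error term is then $\int\varphi\,\nabla u\cdot\nabla\chi_\epsilon$, bounded by $\|\varphi\,\nabla u\|_{L^\infty}\,\|\nabla\chi_\epsilon\|_{L^1}\lesssim\epsilon$ (equivalently, integrate your $\Delta\chi_\epsilon$ term by parts once, which is legitimate because $u\varphi$ is Lipschitz). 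Since the tubular neighborhoods have volume $\lesssim\sum_i r_i^{n}\to 0$, dominated convergence handles the main terms, and $\Delta u=g$ holds distributionally on all of $M$. With these corrections your proof is complete; note that it in effect reproves, by hand and for distributions of the form $\operatorname{div}$ of a bounded vector field, the support-theorem input that the paper imports from the literature -- which makes your version arguably more robust at exactly the point where the hypothesis $\mathcal{H}^{n-1}(Y)=0$ (rather than a capacity assumption) must be used.
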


\begin{proof}
The question is local and we  may restrict to a small open ball $B$ around a
given point in $M$. In this ball we can solve the Dirichlet problem and find a
map $u_1$ in the Sobolev class $H^{1,2} (B)$, with $\Delta u_1=g$ in $B$.  Since $g$ is Lipschitz continuous,  elliptic regularity gives us $u_1 \in \mathcal C^2  (B)$. If  we can  prove that the locally Lipschitz function $u_2=u-u_1$ is harmonic in $B$, then the regularity of $u$ would follow from the regularity of $u_1$ and $u_2$.

Since $u$ and $u_1$ are in the Sobolev space $H^{1,2} (B)$, so is their difference $u_2$.
Hence the  Laplacian $\Delta u_2$ is a distribution in the Sobolev space $H^{-1,2} (B)$. By assumption this distribution has its support on $Y\cap B$. Since $Y$ has vanishing $(n-1)$-dimensional Hausdorff measure, it follows from \cite[p. 16]{Mazya} and   \cite[p. 70]{Adams} that the only distribution in $H^{-1,2} (B)$
with support in $Y$ is $0$.  Therefore, $\Delta u_2=0$ on $B$.
\end{proof}

\section{Homogeneous basic polynomials}

In this section, we consider a singular Riemannian foliation  $\mathcal F$ with compact leaves on a round sphere $\sphere^n$. Consider the induced foliation $C\mathcal F$ on the Euclidean space $V=\RR ^{n+1}$ invariant under canonical dilations.
 By \cite{AR15}, both foliations have basic mean curvature and the results from the previous section, show that for a smooth function $f:V \to \RR$ its averaged function $[f]$ is smooth as well. Since the leaves of $C\mathcal F$ through points in $\sphere ^n $ coincide with the corresponding leaves of $\mathcal F$, the average of
$f|_{\sphere ^n}$ with respect to $\mathcal F$ is just the restriction of
$[f]$ to the sphere  ${\sphere ^n}$.

The following observation together with Theorem \ref{C:smooth} finishes the proof of Theorem \ref{smooth average}:
\begin{prop}
If $f:V \to \RR$ is a homogeneous polynomial, $[f]$ is a homogeneous polynomial of the same degree.
\end{prop}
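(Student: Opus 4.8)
The plan is to combine two ingredients: the smoothness of $[f]$ already guaranteed by Theorem \ref{C:smooth}, and a homogeneity property of $[f]$ coming from the dilation-invariance of the leaves of $C\mathcal F$. Once $[f]$ is known to be a smooth function on all of $V=\RR^{n+1}$ that is homogeneous of the (integer) degree $d=\deg f$, an elementary argument forces it to be a homogeneous polynomial of degree $d$.

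First I would record the scaling behaviour of the average. Write $D_r\colon V\to V$ for the dilation $x\mapsto rx$, $r>0$, so that $f\circ D_r=r^d f$. By construction the leaves of $C\mathcal F$ satisfy $L_{rp}=D_r(L_p)$, and $D_r$ restricted to a $k$-dimensional leaf multiplies its Riemannian volume by $r^k$; in particular $\vol(L_{rp})=r^k\vol(L_p)$ and $D_r^{*}(\dvol_{L_{rp}})=r^k\,\dvol_{L_p}$. Substituting $x=D_r(y)$ in the defining integral \eqref{main} gives
\[
[f](rp)=\frac{1}{\vol(L_{rp})}\int_{L_{rp}}f\,\dvol_{L_{rp}}=\frac{1}{r^k\vol(L_p)}\int_{L_p}f(ry)\,r^k\,\dvol_{L_p}(y)=r^d\,[f](p).
\]
Thus $[f]\circ D_r=r^d[f]$ on $V\setminus\{0\}$, and by continuity $[f]$ is homogeneous of degree $d$ on all of $V$.

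Then I would invoke Theorem \ref{C:smooth}: since $C\mathcal F$ has basic mean curvature and $f$ is smooth, $[f]$ has a smooth representative on all of $\RR^{n+1}$, including the origin. It remains to show that a smooth function $g$ on $\RR^{n+1}$ which is homogeneous of an integer degree $d\ge 0$ must be a homogeneous polynomial of degree $d$. Differentiating the identity $g\circ D_r=r^d g$ shows that $\partial^{\alpha}g$ is homogeneous of degree $d-|\alpha|$ for every multi-index $\alpha$. For $|\alpha|=d+1$ this degree equals $-1$; but a continuous function homogeneous of negative degree that is finite at the origin vanishes identically (letting $r\to 0^{+}$ in $h(rx)=r^{s}h(x)$ with $s<0$ forces $h(x)=0$). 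Hence all partial derivatives of $g$ of order $d+1$ vanish, so $g$ is a polynomial of degree at most $d$; being also homogeneous of degree $d$, its terms of degree $<d$ vanish, and $g$ is a homogeneous polynomial of degree $d$. Applying this to $g=[f]$ completes the proof.

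I do not expect a genuine obstacle: all the analytic weight has already been borne by Theorem \ref{C:smooth}, and the remaining steps are the bookkeeping of the volume scaling (immediate once one notes that $D_r$ multiplies $k$-dimensional volume by $r^k$) and the standard fact that smooth plus homogeneous of integer degree forces polynomiality. The one point deserving care is that smoothness \emph{at the origin}, not merely on $V\setminus\{0\}$, is precisely what makes the negative-degree derivatives vanish; this is why the full strength of Theorem \ref{C:smooth}, rather than the elementary smoothness on the regular stratum, is indispensable.
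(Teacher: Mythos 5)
Your proof is correct and takes essentially the same route as the paper: dilation-invariance of $C\mathcal F$ yields the scaling identity $[f]\circ D_r=r^d[f]$, Theorem \ref{C:smooth} supplies smoothness at the origin, and the elementary fact that a smooth function homogeneous of integer degree $d$ is a homogeneous polynomial (which the paper compresses into the phrase ``as one can see from the Taylor expansion'' and you prove via the vanishing of the order-$(d+1)$ derivatives) finishes the argument. You merely spell out the volume-scaling computation and the polynomiality lemma that the paper's one-paragraph proof leaves implicit.
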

\begin{proof}
A smooth function $f:V \to \RR$ is a homogeneous polynomial of degree $m$ if and only if
\begin{equation} \label{homo}
f(rx)= r^m f(x)
\end{equation} holds true for all $x\in V$ and $r\in [0,\infty )$, as one can see from the Taylor expansion.
   Since the foliation $C\mathcal F$ is invariant under dilations,  equality \eqref{homo} for the function $f$ implies the  same equality for the average function $[f]$.  Thus the result follows from Theorem \ref{C:smooth}.
\end{proof}

Consider now the ring $\RR[V]^b$ of basic polynomials on $V$ with respect to $C\mathcal F$.  This is a subring of the ring $\RR[V] =\RR [x_1,....,x_{n+1}]$.
Since the average  $[\cdot ]:\RR[V]\to \RR[V]^b$ preserves the degree,  we see that
$\RR[V]^b$  is homogeneous:  For any polynomial $p\in \RR[V]^b$, the homogeneous  summands of $p$  are again in $\RR[V]^b$.

 Hilbert's proof of finite generation of the rings of invariants (cf. \cite[p. 274]{We}) applies to our situation:
\begin{prop}  \label{hilbert}
The ring $\RR[V]^b$ of basic polynomials is finitely generated.
\end{prop}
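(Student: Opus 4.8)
The plan is to adapt Hilbert's classical argument for finite generation of rings of invariants, with the averaging operator $[\cdot]$ playing the role of the Reynolds operator. First I would consider the ideal $I \subseteq \RR[V]$ generated by all homogeneous basic polynomials of positive degree. Since $\RR[V]$ is a Noetherian ring by Hilbert's basis theorem, this ideal is finitely generated, and because $\RR[V]^b$ is homogeneous (as established just above using degree-preservation of $[\cdot]$), I may choose a finite set of generators $f_1, \ldots, f_r$ that are themselves homogeneous basic polynomials of positive degree. The claim I would prove is that these $f_i$ generate $\RR[V]^b$ as an $\RR$-algebra.

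The key tool is that the averaging operator is $\RR[V]^b$-linear, in the following sense: for a basic polynomial $b$ and any polynomial $g$, one has $[b \cdot g] = b \cdot [g]$, since $b$ is constant on almost every leaf and hence may be pulled out of the fiber integral in \eqref{main}. This is the algebraic analogue of the Reynolds identity, and it follows directly from the definition of $[\cdot]$ together with the fact that $[g]$ is the genuine polynomial average furnished by the preceding proposition. The second ingredient is that $[\cdot]$ restricts to the identity on $\RR[V]^b$ and preserves degree.

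The main step is an induction on degree. Given a homogeneous basic polynomial $p$ of degree $m > 0$, I would write $p \in I$, so $p = \sum_{i} g_i f_i$ for some $g_i \in \RR[V]$, which I may take homogeneous with $\deg g_i = m - \deg f_i < m$. Applying the averaging operator and using the Reynolds identity $[g_i f_i] = [g_i] f_i$ together with $[p] = p$, I obtain $p = \sum_i [g_i] f_i$, where each $[g_i]$ is now a homogeneous \emph{basic} polynomial of degree strictly less than $m$. By the inductive hypothesis each $[g_i]$ lies in the subalgebra generated by $f_1, \ldots, f_r$, and therefore so does $p$. The base case $m = 0$ is immediate since constants are generated trivially. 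Since every basic polynomial is an $\RR$-linear combination of its homogeneous components, all of which are basic, this proves that $f_1, \ldots, f_r$ generate $\RR[V]^b$.

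The point I expect to require the most care is the verification of the Reynolds identity $[b \cdot g] = b \cdot [g]$ for basic $b$, and more subtly the justification that the averaged polynomials $[g_i]$ are honestly polynomial, so that the induction stays inside $\RR[V]^b$ rather than escaping into merely smooth basic functions. Both points are already secured by the earlier results: the polynomiality of averages of polynomials is exactly the content of the proposition preceding this statement, and the $\RR[V]^b$-linearity of $[\cdot]$ follows from the fiber-integral formula \eqref{main}, since a basic factor is constant on $L_p$. Granting these, the argument is the verbatim Hilbert induction, which is why the reference to \cite[p. 274]{We} suffices.
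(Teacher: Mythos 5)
Your proposal is correct and follows essentially the same route as the paper: both run Hilbert's induction on degree using homogeneous generators $\rho_1,\ldots,\rho_k$ of the ideal generated by $\RR[V]^b_+$, and both hinge on applying the averaging operator to $p=\sum a_i\rho_i$ to get $p=\sum[a_i]\rho_i$, with polynomiality of the averages supplied by the preceding proposition. Your only addition is to spell out the Reynolds identity $[b\cdot g]=b\cdot[g]$ for basic $b$ (which the paper uses implicitly when averaging the sum), and your justification of it via constancy of $b$ on each leaf in the fiber integral is exactly right.
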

\begin{proof}
By Hilbert's Basis Theorem, the ideal $I$ in $\RR[V]$  generated by the subring $\RR[V]^b_+$ of basic polynomials of positive degree,     is finitely generated   (as a module over $\RR[V]$).   Thus we can find  homogeneous basic
polynomials
 $\rho_1,\ldots \rho_k$  of positive degrees which generate $I$ as an ideal.

We now prove that  $\RR[V]^b=\RR[\rho_1,\ldots \rho_k]$ as a ring, proceeding by induction on the degree. Assume that all $q\in \RR[V]^b$ of degree smaller than $m$ are contained in $\RR[\rho_1\ldots \rho_k]$, and consider some homogeneous $p\in \RR[V]^b$ of degree $m$.
Since $p\in \RR[V]^b_+\subset I$, we can find polynomials $a_1, \ldots a_k \in \RR[V]$ such that
\[
p=\sum a_i \rho_i.
\]
Moreover, we may assume that  each $a_i$ is homogeneous of degree smaller than $m$.
We apply our averaging operator to this equation  and obtain
\[
p=\sum [a_i]\rho_i.
\]
By induction, the basic polynomials $[a_i]$  are contained in $\RR[\rho_1,\ldots\rho_k]$.  Therefore,
$p$ is contained in $\RR[\rho_1,\ldots \rho_k]$ as well.
\end{proof}

There are plenty of basic polynomials:

\begin{prop}
The ring of basic polynomials separates different leaves of  $C\mathcal F$.
\end{prop}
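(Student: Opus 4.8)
The plan is to reduce the statement to separating two points and then to exploit the averaging operator: I would start from an ordinary polynomial that separates the two (compact, disjoint) leaves and average it, using the preceding proposition to guarantee that the result is a \emph{basic} polynomial. Concretely, fix points $p,q\in V=\RR^{n+1}$ lying in different leaves $L_p\neq L_q$ of $C\mathcal{F}$. Since the leaves of $C\mathcal{F}$ are dilated copies of the (compact) leaves of $\fol$, both $L_p$ and $L_q$ are compact, and they are disjoint. Choose a continuous function $h$ on $V$ with $h\equiv 0$ on $L_p$ and $h\equiv 1$ on $L_q$, for instance $h=\dist(\cdot,L_p)/(\dist(\cdot,L_p)+\dist(\cdot,L_q))$. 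On a closed ball $\bar B\supset L_p\cup L_q$, the Stone--Weierstrass theorem provides a polynomial $f$ with $\sup_{\bar B}|f-h|<1/3$, so that $f<1/3$ on $L_p$ and $f>2/3$ on $L_q$.

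Next I would note that the averaged function $[f]$ is itself a basic polynomial. Indeed, writing $f=\sum_m f_m$ as the sum of its homogeneous components and using linearity of the averaging operator together with the preceding proposition (which sends a homogeneous polynomial to a homogeneous basic polynomial of the same degree), $[f]=\sum_m[f_m]$ is a polynomial; it is basic, hence constant on each leaf, by construction.

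The crucial and most delicate step is to control the constant value of $[f]$ on each of the two leaves. For a point $p$ in the principal stratum $M_0$ this is immediate from the defining formula: $[f](p)=\fint_{L_p}f\in[\inf_{L_p}f,\sup_{L_p}f]$. For a point $p$ in a \emph{singular} leaf I would argue by continuity of the polynomial representative of $[f]$ combined with equidistance of the foliation. If $x\in M_0$ satisfies $\dist(x,L_p)<\delta$, then by equidistance every point of $L_x$ has distance exactly $\dist(x,L_p)$ from $L_p$, so $L_x\subset\Tub_\delta(L_p)$; consequently $[f](x)=\fint_{L_x}f$ lies between the infimum and the supremum of $f$ over the $\delta$-tube around $L_p$. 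Letting $x\to p$ and invoking uniform continuity of $f$ on compacta, these bounds converge to $\inf_{L_p}f$ and $\sup_{L_p}f$, so that $[f](p)\in[\inf_{L_p}f,\sup_{L_p}f]$ as well. The same estimate applies at $q$.

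Combining these bounds yields $[f](p)\le 1/3<2/3\le [f](q)$, so the basic polynomial $[f]$ takes different (constant) values on $L_p$ and $L_q$, which is exactly what is required. I expect the genuine obstacle to be the passage to singular leaves in the third step: away from $M_0$ the defining formula \eqref{main} is only valid a.e., and one must show that the value of the everywhere-defined polynomial representative is still pinched between the infimum and supremum of $f$ along the leaf. The key geometric input that makes this work is the equidistance of the leaves, which forces nearby regular leaves to collapse into arbitrarily thin tubes around the singular leaf.
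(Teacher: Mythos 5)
Your proposal is correct and follows essentially the same route as the paper: approximate a bump function separating the two compact leaves by a polynomial via Weierstrass, apply the averaging operator (which preserves polynomials by the preceding proposition), and conclude from the pinching $[f]\in[\inf_L f,\sup_L f]$ on each leaf. The only difference is that you explicitly justify this pinching at singular leaves via equidistance and thin tubes, a detail the paper leaves implicit, and your justification is sound.
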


\begin{proof}
Given two leaves $L_x$, $L_y$, consider a smooth function $f$ (a bump function) which is constant  1 in $L_y$ and constant $0$ in $L_x$. By the theorem of Weierstrass, there exists a polynomial  $P:\RR^{n+1} \to \RR$  such that $|f-P|<\epsilon$ on the compact set $L_x\cup L_y$. Then $[P](x)\in (-\epsilon,\epsilon)$, $[P](y)\in (1-\epsilon,1+\epsilon)$, therefore, $[P]$ separates  $L_x, L_y$.
\end{proof}

The following result  finishes the proof of Theorem \ref{T:algebraic}:

\begin{prop}
In the notations above, let $\rho_1,...,\rho _k$ be generators of the ring $ \RR[V]^b$ of
basic polynomials.
The map $\rho
=(\rho_1,\ldots \rho_k):\RR^{n+1}\to \RR^k$ descends to a homeomorphism of $\RR^{n+1}/\fol$ onto its image.
\end{prop}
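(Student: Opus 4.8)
The plan is to exhibit $\rho$ as a continuous injective map on the leaf space and then to show this map is a homeomorphism onto its image, with only the last point being nontrivial. Throughout I would regard the leaf space $V/C\fol$ of $V=\RR^{n+1}$ as a metric space, with $\dist([x],[y])$ the distance between the compact leaves $L_x$ and $L_y$; for a singular Riemannian foliation with compact leaves this metric induces the quotient topology, makes the projection $\pi\colon V\to V/C\fol$ continuous and open, and renders the quotient locally compact and metrizable. Since each $\rho_i$ is basic, $\rho$ is constant on the leaves of $C\fol$ and hence factors as $\rho=\bar\rho\circ\pi$ with $\bar\rho\colon V/C\fol\to\RR^k$ continuous.

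First I would verify injectivity of $\bar\rho$. If $\rho(x)=\rho(y)$, then since $\rho_1,\dots,\rho_k$ generate $\RR[V]^b$, every basic polynomial $Q=P(\rho_1,\dots,\rho_k)$ satisfies $Q(x)=Q(y)$. By the preceding proposition the ring of basic polynomials separates leaves, so $L_x=L_y$, that is $\pi(x)=\pi(y)$. Hence $\bar\rho$ is injective, and $\bar\rho$ is already a continuous bijection onto its image.

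The key structural input for the remaining step is that the radial function $x\mapsto\abs{x}^2$ is itself basic: each leaf of $C\fol$ is a dilate $rL$ of a leaf $L$ of $\fol\subset\sphere^{n}$, on which $\abs{x}$ takes the constant value $r$. Consequently $\abs{x}^2=P(\rho_1,\dots,\rho_k)$ for some polynomial $P$, and this supplies the properness that upgrades the continuous bijection to a homeomorphism onto the image: on $\rho^{-1}(K)$ for compact $K\subset\RR^k$ one has $\abs{x}^2=P(\rho(x))$ bounded, so preimages of compacta are bounded, hence relatively compact.

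Finally I would establish continuity of the inverse by a subsequence argument, both spaces being metrizable. Suppose $\bar\rho(z_n)\to\bar\rho(z)$ and choose lifts $x_n,x$ with $\pi(x_n)=z_n$ and $\pi(x)=z$. Then $\abs{x_n}^2=P(\rho(x_n))\to P(\rho(x))=\abs{x}^2$, so the $x_n$ lie in a fixed closed ball $\bar B_R$, whence the $z_n$ lie in the compact set $\pi(\bar B_R)$. Given any subsequence of $(z_n)$, sequential compactness yields a further subsequence $z_{n_j}\to z'$, and continuity of $\bar\rho$ together with injectivity forces $\bar\rho(z')=\bar\rho(z)$, so $z'=z$. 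Thus every subsequence of $(z_n)$ has a further subsequence converging to $z$, and therefore $z_n\to z$; this shows $\bar\rho^{-1}$ is continuous. The main obstacle is precisely this last step, since a continuous bijection need not be a homeomorphism, and the device that resolves it is the basicness of $\abs{x}^2$, which provides properness and confines all the relevant points to compact subsets of the leaf space.
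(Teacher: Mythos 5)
Your proof is correct and follows the same outline as the paper's: descend $\rho$ to the leaf space, get injectivity from the separation proposition, and conclude via properness. The one point where you add genuine content is the properness step: the paper justifies it only by noting that the nonempty fibers of $\rho$ are leaves, hence compact, which by itself does not imply properness (compare $\arctan\colon\RR\to\RR$); your observation that $\abs{x}^2$ is basic, hence a polynomial $P(\rho_1,\ldots,\rho_k)$, supplies exactly the coercivity needed to make preimages of compacta compact, and your subsequence argument then cleanly upgrades the proper continuous bijection to a homeomorphism onto the image.
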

\begin{proof}
Since any coordinate $\rho _i$ of $\rho$ is basic, the map $\rho$ descends to a map $\rho_*:\RR^{n+1}/\fol \to \RR^k$.
Since the basic polynomials separate points and $\rho_i$ generate the ring of all basic polynomials, the map $\rho_* : \RR^{n+1}/\fol \to \RR^k$ is a bijection onto its image.  In particular, the non-empty fibers of $\rho$ coincide with leaves of $C\mathcal F$, thus $\rho$ is proper and so is $\rho_*$.
Therefore, the map
$\rho:\RR^{n+1}/\fol \to \RR ^k$  is a homeomorphism onto the image.
\end{proof}

-----------------------------------------------------

\bibliographystyle{amsplain}

\end{document}